\newtheorem{Theorem}{Theorem}[section]
\newtheorem{Lemma}[Theorem]{Lemma}
\newtheorem{Proposition}[Theorem]{Proposition}
\newtheorem{Remark}[Theorem]{Remark}
\def\V{\mbox{Var}}
\def\R\re
\def\V{\bf V}
\def \re{{\mathbb R}}
\def \C{{\mathbb C}}
\def \V{{\bf V}}
\def \H{{\mathbb H}}
\def \cH{{\mathcal H}}
\begin{document}
\title[Helicity and the Ma\~n\'e critical value ]{Helicity and the Ma\~n\'e critical value}

\author[G.P. Paternain]{Gabriel P. Paternain}
 \address{ Department of Pure Mathematics and Mathematical Statistics,
University of Cambridge,
Cambridge CB3 0WB, UK}
 \email {g.p.paternain@dpmms.cam.ac.uk}




\begin{abstract}We establish a relationship between the helicity of a magnetic flow on a closed surface of genus $\geq 2$ and the Ma\~n\'e critical value.

\end{abstract}

\maketitle

\section{Results} Let $N$ be a closed oriented 3-manifold with a volume form $\Omega$. A vector field $F$ on $N$ that preserves $\Omega$ is said to
be {\it null-homologous} or {\it exact} if the closed 2-form $i_{F}\Omega$
is exact. Given a volume preserving null-homologous vector field $F$, the
{\it helicity} $\cH(F)$ is defined by setting
\[\cH(F):=\int_{N}\tau\wedge d\tau=\int_{N}\tau(F)\,\Omega,\]
where $\tau$ is any primitive of $i_{F}\Omega$. It is easy to check that
this definition is independent of the choice of primitive $\tau$.
The helicity (also referred to as the {\it asymptotic Hopf invariant}) measures
how much in average field lines wrap and coil around one another.
The term ``helicity'' was introduced by K. Moffatt \cite{M} who also pointed out
 the topological nature of the invariant. We refer to \cite{AK} for a complete account of this concept as well as its interpretation as an average self-linking number.

In this note we wish to study the following class of volume preserving flows.
Let $M$ be a closed oriented surface of genus $\geq 2$ and let $g$ be a Riemannian metric on $M$. The unit circle bundle $SM$ determined by $g$ is a closed
3-manifold with volume form $\Omega=\alpha\wedge d\alpha$, where $\alpha$ is
the contact 1-form of the geodesic flow of $g$. Let $X$ be the geodesic vector field and let $V$ denote the infinitesimal generator of the circle action on
the fibres of $SM$.
Suppose we are given in addition a 2-form $\sigma$ on $M$ (automatically closed). We may write $\sigma=f\,\mu_{g}$, where $\mu_g$ is the area form
of $g$. The vector field $F:=X+f\,V$ preserves the volume form
$\Omega$ and in fact is null-homologous. Its helicity is easy to compute
and one obtains:
\begin{equation}
\cH(F)=2\pi A+[\sigma]^{2}/\chi,
\label{hel}
\end{equation}
where $A$ is the area of $g$, $[\sigma]$ is the total integral of $\sigma$
and $\chi$ is the Euler characteristic of $M$ (we refer to Section \ref{proofs}
for proofs of these elementary facts). In fact formula (\ref{hel}) holds
also for the 2-sphere, but in that case $\cH(F)>0$ always.

There is a well known symplectic interpretation for the vector field
$F$. Denote by $\pi:TM\to M$ the canonical foot-point projection.
Consider the twisted symplectic form $\omega:=-d\alpha+\pi^*\sigma$
on $TM$ and the Hamiltonian $H:TM\to \re$ given by the kinetic energy
of $g$, i.e., $H(x,v):=g_{x}(v,v)/2$. The Hamiltonian vector field  $\xi_{H}$
of $H$ with respect to $\omega$ restricted to $SM$ is precisely
$F$ ($\xi_{H}$ is defined by $i_{\xi_{H}}\omega=dH$). 
Note that $SM$ coincides with the energy hypersurface $H=1/2$.
The flow of $F$ is called a {\it magnetic flow} or a {\it twisted geodesic flow}.
It is also well known, that considering $\xi_{H}$
restricted to a hypersurface $H=k$ is equivalent
to considering the vector field $F_{s}$ on $SM$ determined
by $s\,\sigma$, where $s=1/\sqrt{2k}$. We can think of $s$ as the
intensity that modulates the magnetic field $\sigma$. The projections
to $M$ of the orbits of $F_{s}$ model the motion of a particle with charge
$s$ under the effect of the magnetic field $\sigma$. If
we let $\omega_{s}:=-d\alpha+s\pi^*\sigma$, then the pair
$(SM,\omega_{s})$ determines a Hamiltonian structure for every $s$.

This note is partially motivated by the following two open questions:
\begin{enumerate}[A.] 
\item for which values of $s\in (0,\infty)$ is the Hamiltonian structure
 $(SM,\omega_{s})$ of contact type?
(Recall that contact type means that there exists a smooth 1-form $\lambda$
on $SM$ such that $d\lambda=\omega_s$ and $\lambda(F_{s})$ never vanishes.)
\item for which values of $s\in (0,\infty)$ does $F_s$ have a closed
orbit?
\end{enumerate}
The literature on Question B is vast and it is impossible to do it justice
in this brief note. Suffices to say that the study of the problem of existence of closed
orbits for magnetic flows was initiated by V.I. Arnold \cite{A} and S.P. Novikov \cite{N}
in the early 80's with subsequent work by many others. We refer the reader
to \cite{G1,G2} for a survey of some of these results, particularly for the 
case of surfaces discussed here.

Obviously A and B are related by Taubes' proof of the Weinstein
conjecture \cite{T}.
In the exact case $[\sigma]=0$, both questions were solved in \cite{CMP} with
the help of Aubry-Mather theory. An important ingredient in \cite{CMP}
was the Ma\~n\'e critical value whose definition we now recall.
Let $p:\tilde{M}\to M$ denote the universal covering of $M$. Since we are
assuming that $M$ has $\chi<0$, $\tilde{M}$ is diffeomorphic to $\re^2$
and thus $p^*\sigma$ has a primitive. Set
\[c(g,\sigma):= \inf_{\theta}\;\sup_{x\in \tilde{M}}\;
   \frac{1}{2}|\theta_{x}|^{2},\]
where the infimum runs over all 1-forms $\theta$ with $d\theta=p^*\sigma$,
and the norm of $\theta$ is taken with respect to the lifted Riemannian metric.
\footnote{More generally, there is a Ma\~n\'e critical value associated to
any covering of $M$ on which $\sigma$ becomes exact. The main result in \cite{CMP} asserts that if $\sigma$ is exact on $M$, a hypersurface with energy $k$ is of contact type iff $k>c_{0}$, where $c_0$ is the Ma\~n\'e critical value of the {\it abelian covering}.
This value coincides with the minimum of Mather's alpha function. Moreover,
every energy level has a closed orbit.}
It is easy to see that $c(g,\sigma)<\infty$ using the fact that on the
upper-half plane with the hyperbolic metric, the primitive $y^{-1}dx$ of the area form $y^{-2} dx\wedge dy$ is bounded. If $\sigma=\mu_g$, then $1/\sqrt{2c(g,\sigma)}$
coincides with Cheeger's isoperimetric constant of the universal covering
(cf. \cite{BP}). The critical value $c=c(g,\sigma)$
is also relevant for us because it is known that for
any $s\in (0,s_{c})$, every non-trivial homotopy class of $M$ contains
the projection of a closed orbit of $F_s$ \cite{P}, where $s_{c}:=1/\sqrt{2c}$.
A thorough discussion of the relevance of the Ma\~n\'e critical value
to the symplectic topology of hypersurfaces maybe found in \cite{CFP}.

Let us return to the helicity now. In the non-exact case ($[\sigma]\neq 0$)
an inspection of (\ref{hel}) tells us that
there is a unique positive value of $s$ for which $\cH(F_{s})=0$ and is given by
\[s_{h}^2:=\frac{-2\pi\chi\,A}{[\sigma]^2}.\]
Since the helicity vanishes, $(SM, \omega_{s_{h}})$ cannot be of
contact type. How does it relate with $s_c$?
The answer is given by the following theorem,

\medskip

\noindent {\bf Theorem.} {\it For an arbitrary pair $(g,\sigma)$ on a closed surface of genus $\geq 2$ with $[\sigma]\neq0$, we have
$s_{c}\leq s_{h}$ with equality if and only
if $g$ has constant Gaussian curvature and $\sigma$ is a constant multiple
of the area form of $g$.
}

\medskip

We note that if $g$ has constant curvature $-1$ and $\sigma=\mu_g$, then
$s_{c}=s_{h}=1$. The vector field $F_1$ is the horocycle flow, which
is uniquely ergodic and has of course, zero helicity. The theorem is saying that unless we are in this well understood homogeneous situation, if we wish
to answer Questions A and B above, we would need to wrestle with a non-trivial
interval $[s_{c},s_{h}]$ whose Hamiltonian structures are probably out of reach
of current technology. Presumably, every $s\in [s_{c},s_{h}]$ is {\it not}
of contact type, but even for $s_{c}$ this is not known in full generality.

The proof of the theorem has two ingredients. One was already present
in \cite[Theorem B]{BP} (but its relation with the helicity was not exposed)
and it will give fairly easily the inequality
and the fact that, if equality holds, then $g$ must
have constant curvature. However to show that $\sigma$ must be a constant
multiple of the area form requires a new tool. This is provided by techniques
closely related with the Selberg trace formula and the study of an appropriate
Radon transform of 1-forms on geodesic circles of the Riemann surface $M$.

\medskip

\noindent{\it Caveat on terminology.} In magnetohydrodynamics (MHD) the role
of $F$ is played by a magnetic field $\bf B$, frozen into a fluid of infinite conductivity filling $N$. What we call here magnetic field 
is the 2-form $\sigma$ and the two ``fields'' should not be confused.

\section{Proofs} 
\label{proofs}

Let $M$ be a closed oriented surface with genus $\geq 2$
 and $g$ a Riemannian
 metric. As above $SM$ is the unit circle bundle. We consider a 2-form
$\sigma$ on $M$ with total integral $[\sigma]$ and the magnetic flow on $SM$ defined by the pair
$(g,\sigma)$. The vector field of the magnetic flow is given
by $F:=X+fV$ where $f$ is defined by $\sigma=f\,\mu_{g}$, and
$\mu_{g}$ is the area form of $g$ with total integral $A$. There is a coframe of 1-forms 
$\{\alpha,\gamma,\psi\}$ in $SM$ related by the structure equations $d\alpha=\psi\wedge \gamma$, $d\gamma=-\psi\wedge \alpha$
and $d\psi=-K\,\alpha\wedge\gamma$, where $K$ is the Gaussian curvature of $g$ and $\alpha$
is the contact 1-form dual to the geodesic vector field $X$.
The coframe $\{\alpha,\gamma,\psi\}$ is dual to $\{X,H,V\}$, where $H=[V,X]$.

Let $\Omega:=\alpha\wedge d\alpha$ be the Sasaki volume form on $SM$.
A calculation using the structure equations shows that $i_{F}\Omega=d\alpha-f\alpha\wedge\gamma=d\alpha-f\pi^* \mu_g$. In other words $i_{F}\Omega=d\alpha-\pi^*\sigma$.
It is easy to find a primitive for $\pi^*\sigma$. Write
$\sigma=-aK\mu_{g}+d\beta$, where $a$ satisfies $[\sigma]=-a\,2\pi\chi$ and $\beta$ is a 1-form
on $M$.
Then $\pi^*\sigma=a\,d\psi+d\pi^*\beta$ and thus
$i_{F}\Omega=d\tau$, where $\tau:=\alpha-a\,\psi-\pi^*\beta$.
This shows that $F$ is null-homologous and
\[\tau(F)(x,v)=1-a\,f(x)-\beta_{x}(v).\]
Since the function $(x,v)\mapsto \beta_{x}(v)$ is odd with respect to
 the flip $v\mapsto -v$ we have
\[\int_{SM}\beta_{x}(v)\,\Omega=0.\]
It follows that the helicity of $F$ is given by
\[\cH(F)=2\pi A-a\,2\pi\,[\sigma]=2\pi A+[\sigma]^2/\chi,\]
which proves (\ref{hel}).

\begin{Remark}{\rm The calculation of the helicity for an arbitrary magnetic flow
was also carried out in \cite[Equation (4)]{P}. Up to the factor $-2\pi A$, the helicity
is precisely what I called in \cite{P} the action of the Liouville measure.
The Proposition in \cite{P} could be rephrased by saying that if $F_{s_{h}}$ has
no conjugate points, then $g$ must have constant curvature, $\sigma$ is a constant multiple
of the area form and $F_{s_{h}}$ is a horocycle flow. In fact, the proof of the
Proposition in \cite{P} shows that  for any $s\in (s_{h},\infty)$, $F_{s}$ has conjugate points.

The helicity for geodesic and horocycle flows in the constant curvature case
 is also computed in \cite[Proposition 4.9]{AK} and
\cite[Example 2.2.1]{VV}.

}
\end{Remark}

\begin{Remark}{\rm If we replace $\sigma$ by $s\sigma$ in the argument above
we obtain a primitive $\tau_{s}:=\alpha-as\,\psi-s\pi^*\beta$
of $i_{F_{s}}\Omega=-\omega_{s}$ such that
\[\tau_{s}(F_{s})=1-a\,s^2\,f(x)-s\,\beta_{x}(v).\]
This shows right away that if $f$ does not vanish (i.e. $\sigma$ is symplectic), then there is
$s_{0}$ such that for  any $s>s_0$, $(SM,\omega_{s})$ is of contact type
and therefore it has a closed orbit.

}
\end{Remark}

\subsection{Ma\~n\'e's critical value and helicity} By the conformal equivalence theorem
there exists a unique positive scalar $C^{\infty}$ function $\rho$ such that
the metric
$\rho^{2}g$ has constant negative curvature and the same area as $g$. 
Let $\rho_{g}$ be the {\it conformality coefficient} given by
\[\rho_{g}:=\frac{1}{A}\int_{M}\rho\,\mu_{g}.\]
By the Cauchy-Schwartz inequality, 
$\rho_{g}\leq 1$ and equality holds if and only if
$g$ itself is a metric of constant negative curvature.
In \cite[Theorem B]{BP} it is shown that
\begin{equation}
c(g,\sigma)\geq \frac{[\sigma]^2}{-4\pi \chi A\,\rho_{g}^2}.
\label{ineq:gk}
\end{equation}
In order to make this note self-contained we will give a proof of (\ref{ineq:gk}), which is actually
a little simpler than the one in \cite{BP} and it will naturally lead us to the proof of the Theorem. The key idea comes from a similar
estimate of Katok \cite{K} of the Cheeger isoperimetric constant.
Without loss of generality we may suppose that $g$ has area $A=-2\pi\chi$
and hence $g_{0}:=\rho^2\,g$ has constant negative curvature $-1$.

We lift everything to the universal covering $p:\H^2\to M$ and we consider the
Lagrangian $L(x,v)=g_{x}(v,v)/2-\theta_{x}(v)$, where $\theta$ is a primitive
of $p^{*}\sigma$. The critical value $c(g,\sigma)$ can also be characterised
as the infimum of the values of $k\in\re$ such that
the {\it action} 
\[A_{L+k}(\gamma):=\int_{0}^{T}(L+k)(\gamma(t),\dot{\gamma}(t))\,dt\geq 0\]
for all absolutely continuous closed curves $\gamma:[0,T]\to\H^2$
and any $T>0$. This was Ma\~n\'e's original approach to the critical
value \cite{Ma} and in the setting of non-exact magnetic fields, a proof
of the equivalence between these two ways of characterising $c$
may be found in \cite{BP}.

Consider a geodesic circle $C_r$ of $g_0$ 
of radius $r$. It has $g_0$-length
$2\pi\sinh r$ and encloses a disk $D_r$ of $g_{0}$-area $2\pi(\cosh r-1)$.
Its $g$-length is given by 
\[\ell_{g}(C_r)=\int_{0}^{2\pi\sinh r}\rho^{-1}(\gamma(t))\,dt,\]
where $\gamma:[0,2\pi\sinh r]\to\H^2$ is a parametrisation of $C_r$ 
with speed one with respect to $g_0$.
Now parametrise $C_r$ to have speed $\sqrt{2c}$ with respect to $g$.
We must have
\[A_{L+c}(C_{r})\geq 0\]
for all $r>0$.
In other words, using Stokes theorem and the definitions,
\[\sqrt{2c}\,\ell_{g}(C_r)-\int_{D_r}p^*\sigma\geq 0\]
for all $r>0$.
We can write $\sigma=a\,\mu_{g_{0}}+d\beta$, where $\mu_{g_{0}}$ is the area
form of $g_0$. Clearly $[\sigma]=-a\,2\pi\chi$. Thus
\begin{equation}
\sqrt{2c}\,\ell_{g}(C_r)-a\,2\pi(\cosh r-1)-\int_{C_r}p^*\beta\geq 0
\label{ineq:clave}
\end{equation}
for all $r>0$. 
The key observation now is that the projection to $M$ of a circle $C_r$ in
$\H^2$ converges to a horocycle when the radius goes to infinity,
and the projection to the unit sphere bundle of $(M,g_{0})$ of the
normalised arc length measure weakly converges to an invariant
probability measure for the horocycle flow. But the only invariant probability
measure for the horocycle flow is the Liouville measure $\nu$ of $g_0$ \cite{F}.

If we now divide (\ref{ineq:clave}) by $2\pi\sinh r$, let $r$ go to infinity and use
the definition of weak convergence
we derive 
\[\sqrt{2c}\int_{M}\rho^{-1}\,\mu_{g_{0}}-a\,A\geq 0,\]
since the integral of $\beta$ (regarded as a function on $TM$) over the unit circle
bundle of $g_0$ with respect to $\nu$ must vanish.
Equivalently, using that $[\sigma]=a\,A=-a\,2\pi\chi$ we have
\[\sqrt{2c}\geq \frac{[\sigma]}{\int_{M}\rho^{-1}\,\mu_{g_{0}}}.\]
But
\[-2\pi\chi\,\rho_{g}=\int_{M}\rho\,\mu_{g}=\int_{M}\rho^{-1}\,\mu_{g_{0}}.\]
Hence
\[\sqrt{2c}\geq  \frac{[\sigma]}{-2\pi\chi\,\rho_{g}}\]
which proves (\ref{ineq:gk}) when $A=-2\pi\chi$ (note that
$c(g,\sigma)=c(g,-\sigma)$).

Next we observe that the inequality $s_{c}^2\leq s_{h}^2$ is
equivalent to $c\geq \frac{[\sigma]^2}{-4\pi \chi A}$ which follows
immediately from (\ref{ineq:gk}) since $\rho_{g}\leq 1$. Moreover, if equality
holds then $\rho_{g}=1$ and $g$ must have constant curvature.
What remains to prove in the Theorem from the introduction
is that if $s_c=s_h$ then $f$ must also
be constant. Before we proceed any further we would like to record
inequality (\ref{ineq:gk}) in the following form,

\begin{Proposition} For any pair $(g,\sigma)$ on a surface with genus $\geq 2$
we have
\[2\,c(g,\sigma)\geq 2\rho_{g}^2\,c(g,\sigma)\geq 1-\frac{\cH(F)}{2\pi A}.\]
\end{Proposition}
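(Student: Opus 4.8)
The plan is to observe that the Proposition is merely a reformulation of the already-proved inequality (\ref{ineq:gk}) once the helicity formula (\ref{hel}) is substituted; no new analysis is required.

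First I would dispose of the leftmost inequality $2\,c(g,\sigma)\geq 2\rho_{g}^2\,c(g,\sigma)$. Since $c(g,\sigma)$ is defined as an infimum of suprema of the nonnegative quantity $\frac{1}{2}|\theta_{x}|^2$, we have $c(g,\sigma)\geq 0$; combined with the Cauchy--Schwarz bound $\rho_{g}\leq 1$ recorded just after the definition of the conformality coefficient, this gives $\rho_{g}^2\leq 1$ and hence $2\rho_{g}^2\,c(g,\sigma)\leq 2\,c(g,\sigma)$.

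For the remaining inequality I would start from (\ref{ineq:gk}) and multiply through by $2\rho_{g}^2$, obtaining $2\rho_{g}^2\,c(g,\sigma)\geq \frac{[\sigma]^2}{-2\pi\chi A}$. It then suffices to identify the right-hand side with $1-\frac{\cH(F)}{2\pi A}$. Using (\ref{hel}) in the form $\cH(F)=2\pi A+[\sigma]^2/\chi$, a one-line computation gives $1-\frac{\cH(F)}{2\pi A}=-\frac{[\sigma]^2}{2\pi\chi A}=\frac{[\sigma]^2}{-2\pi\chi A}$, which matches the previous display exactly.

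There is no genuine obstacle here---all the content resides in (\ref{ineq:gk}), which has already been established. The only point warranting care is the sign: since $\chi<0$ for a surface of genus $\geq 2$, the quantity $\frac{[\sigma]^2}{-2\pi\chi A}$ is nonnegative, which is consistent with the whole chain of inequalities and confirms that no sign error has crept into the substitution.
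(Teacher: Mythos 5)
Your proof is correct and matches the paper's intent exactly: the Proposition is stated there merely as a way to ``record inequality (\ref{ineq:gk})'' in helicity form, so the content is precisely the substitution of (\ref{hel}) into (\ref{ineq:gk}) that you carry out, together with the trivial observations $c(g,\sigma)\geq 0$ and $\rho_g\leq 1$ for the leftmost inequality.
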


As above, and without loss of generality we shall
assume that $g$ has constant curvature $-1$. Since $s_{c}=s_{h}$
we can write $\sigma=a\mu_{g}+d\beta$, where $[\sigma]=-a\,2\pi\chi$ 
and $2c=a^2$. We may suppose that in fact $a=\sqrt{2c}$.
Using (\ref{ineq:clave}) we obtain

\[2\pi\,\sqrt{2c}\,(1+\sinh r-\cosh r)-\int_{C_r}p^*\beta\geq 0\]
which implies
\begin{equation}
\int_{C_r}p^*\beta\leq 2\pi\,\sqrt{2c}\,(1-e^{-r})\leq 2\pi\,\sqrt{2c}
\label{ineq:rad}
\end{equation}
for all $r>0$. In the next subsection we explain how use the bound (\ref{ineq:rad}) to show that in fact $\beta$ must be closed, and consequently $\sigma$
is a constant multiple of $\mu_g=\mu_{g_{0}}$.

\subsection{A Radon transform}
Let $h:M\to \re$ be a smooth function with $\int_{M}h(x)\,dx=0$.
As before $M=\H^2/\Gamma$ and $p:\H^2\to M$ the quotient map.
 We consider the Radon transform $\hat{h}_r$
of $h$ on geodesic disks defined as follows. 
Given $x\in M$, let $\tilde{x}$ be a lift of $x$ and let
$D(\tilde{x},r)$ be the disk with center $\tilde{x}$ and radius $r$.
We set
\[\hat{h}_{r}(x):=\int_{D(\tilde{x},r)}h\circ p(y)\,dy.\]
It is easy to check that this definition is independent of the
lift of $x$.

Let $\varphi_{0},\varphi_{1},\cdots$ denote a complete orthonormal
sequence of {\it real} eigenfunctions of the Laplacian of $M$ corresponding
to eigenvalues $0=\lambda_{0}<\lambda_{1}\leq \lambda_{2}\leq \cdots\nearrow \infty$. Write $h=\sum_{j} a_{j}\varphi_{j}$. Since $h$ has zero average
over $M$, $a_{0}=0$.

\begin{Lemma}$\hat{h}_{r}(x)=\sum_{j} a_{j}q_{r}(s_{j})\varphi_{j}(x)$, where
$q_{r}$ is the function ($s\in\C$):
\[q_{r}(s)=4\sqrt{2}\int_{0}^{r}\cos su\,(\cosh r-\cosh u)^{1/2}\,du\]
and $s_{j}$ is any of the roots of $\frac{1}{4}+s_{j}^2=\lambda_{j}$.

\end{Lemma}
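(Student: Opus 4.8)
The plan is to recognise the map $h\mapsto\hat h_r$ as an $\H^2$-invariant radial averaging operator and to exploit the fact that such operators are simultaneously diagonalised by the eigenfunctions of the Laplacian, with the eigenvalue on $\varphi_j$ depending only on $\lambda_j$. Since $\hat h_r(x)$ is defined through the intrinsic geodesic disk $D(\tilde x,r)$, it suffices by linearity and continuity of the expansion to treat a single eigenfunction $h=\varphi_j$. I lift it to $\tilde\varphi_j:=\varphi_j\circ p$, an eigenfunction of the Laplacian on $\H^2$ with the same eigenvalue $\lambda_j$; this is a purely local statement, so the fact that $\tilde\varphi_j$ is not square-integrable on $\H^2$ plays no role.

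The first key step is the mean value property for eigenfunctions of the hyperbolic Laplacian. In geodesic polar coordinates about $\tilde x$ the radial part of $\Delta$ acting on rotation-invariant functions is $\partial_\rho^2+\coth\rho\,\partial_\rho$, and the circular average $m(\rho,x):=\frac{1}{2\pi\sinh\rho}\int_{C(\tilde x,\rho)}\tilde\varphi_j$ satisfies the Darboux identity $\Delta_x m=(\partial_\rho^2+\coth\rho\,\partial_\rho)m$. Combined with $\Delta_x m=-\lambda_j m$ this forces $m(\rho,x)$, as a function of $\rho$, to solve the radial eigenvalue ODE with data $m(0,x)=\varphi_j(x)$ and $\partial_\rho m(0,x)=0$. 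By uniqueness of this initial value problem $m(\rho,x)=\phi_{s_j}(\rho)\,\varphi_j(x)$, where $\phi_s$ is the normalised radial solution. Changing variables to $t=\cosh\rho$ turns the ODE into Legendre's equation with index $\nu=-\tfrac12+is$, so that $\phi_s(\rho)=P_{-\frac12+is}(\cosh\rho)$ and $\lambda=\tfrac14+s^2$, matching the stated root condition.

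Integrating the identity $m(\rho,x)=\phi_{s_j}(\rho)\varphi_j(x)$ against the polar area element $\sinh\rho\,d\rho\,d\theta$ over $0\le\rho\le r$ yields $\hat h_r(x)=q_r(s_j)\varphi_j(x)$ with $q_r(s)=2\pi\int_0^r\sinh\rho\,\phi_s(\rho)\,d\rho$, which already establishes the diagonal form $\hat h_r=\sum_j a_j q_r(s_j)\varphi_j$. It then remains to identify this $q_r$ with the oscillatory integral in the statement. For that I would insert the Mehler--Dirichlet representation $P_{-\frac12+is}(\cosh\rho)=\frac{\sqrt2}{\pi}\int_0^\rho\frac{\cos su}{\sqrt{\cosh\rho-\cosh u}}\,du$, interchange the $u$- and $\rho$-integrations over the triangle $0<u<\rho<r$, and evaluate the inner integral by the substitution $w=\cosh\rho$:
\[\int_u^r\frac{\sinh\rho}{\sqrt{\cosh\rho-\cosh u}}\,d\rho=2\sqrt{\cosh r-\cosh u}.\]
Collecting constants produces exactly $q_r(s)=4\sqrt2\int_0^r\cos su\,(\cosh r-\cosh u)^{1/2}\,du$.

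The main obstacle is the second step: one must be certain that the pointwise mean value identity with the Legendre kernel is valid for the lifted eigenfunction $\tilde\varphi_j$, even though the global harmonic analysis of $\H^2$ (Plancherel theory, $L^2$ spectral decomposition) does not apply to this non-$L^2$ function. This is resolved by the observation that the identity is entirely local, resting only on the Darboux equation and the uniqueness theorem for the radial ODE; once that is in place, the Mehler--Dirichlet manipulation is a routine special-function computation and the Fubini interchange is justified since the integrand is continuous on the compact triangle.
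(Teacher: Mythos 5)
Your proof is correct, but it follows a genuinely different route from the paper's. The paper works on $M\times M$: it forms the automorphic kernel $K_r(x,y)=\sum_{\gamma\in\Gamma}k_r(x,\gamma y)$ by summing the indicator function of the disk over the deck group, writes $\hat{h}_r(x)=\int_M h(y)K_r(x,y)\,dy$, and then quotes Randol's chapter in Chavel's book (Selberg trace formula techniques) both for the fact that such a point-pair invariant acts on each eigenfunction $\varphi_j$ by a scalar and for the explicit evaluation of that scalar $q_r(s_j)$. You instead work directly on $\H^2$ with the lifted eigenfunction: the Darboux equation plus uniqueness for the radial ODE gives the mean value property $m(\rho,x)=P_{-1/2+is_j}(\cosh\rho)\,\varphi_j(x)$, and the Mehler--Dirichlet representation plus an interchange of integrals over the triangle $0<u<\rho<r$ produces exactly the stated $q_r$. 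In effect you reprove, in a self-contained and essentially elementary way, the two facts the paper outsources to the literature; what the paper's route buys is brevity and an explicit link to the trace formula machinery it invokes elsewhere, while yours makes transparent where the kernel $4\sqrt{2}\,\cos su\,(\cosh r-\cosh u)^{1/2}$ comes from (it is the Selberg transform of the disk indicator, computed by hand), and it visibly covers the small-eigenvalue case $s_j\in i\re$ since the Legendre and Mehler--Dirichlet formulas hold for complex degree. Two small repairs are needed. First, the step $\Delta_x m=-\lambda_j m$ uses the (standard, but unstated) fact that the spherical mean operator commutes with $\Delta$, which holds because it is convolution with a rotation-invariant measure on a homogeneous space. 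Second, your justification of the Fubini step is off as stated: the integrand is not continuous on the closed triangle, since it blows up like $(\cosh\rho-\cosh u)^{-1/2}$ on the diagonal $\rho=u$; the interchange is nevertheless valid because this singularity is absolutely integrable (Tonelli handles imaginary $s_j$, and the real case is dominated by the case $s_j=0$).
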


\begin{proof} This is the application of the techniques connected
with the Selberg trace formula
\cite{S} and is fully explained in Section 2 of Randol's chapter
in Chavel's book \cite{C}. It goes as follows. We let $k_{r}(x,y)$ be 
the function on $\H^2\times \H^2$ such that $k_{r}(x,y)=1$ if $y\in D(x,r)$
and $k_{r}(x,y)=0$ otherwise. Set
$K_{r}(x,y):=\sum_{\gamma\in\Gamma}k_{r}(x,\gamma y)$. Then it is easy to check
that
\[\hat{h}_{r}(x)=\int_{M}h(y)K_{r}(x,y)\,dy.\]
Using the expansion $h=\sum_{j} a_{j}\varphi_{j}$ we obtain
\[\hat{h}_{r}(x)=\sum_{j}a_{j}\int_{M}\varphi_{j}(y)K_{r}(x,y)\,dy.\]
But it is shown in \cite[Chapter X, Theorem 1 and p. 277]{C} that
\[\int_{M}\varphi_{j}(y)K_{r}(x,y)\,dy=q_{r}(s_{j})\varphi_{j}(x),\]
where $q_{r}(s)$ is calculated in \cite[p. 275]{C} yielding the formula
in the lemma.

\end{proof}

\begin{Remark} {\rm If $s_{j}$ is real, then  
\[q_{r}(s_j)=4\sqrt{2}\int_{0}^{r}\cos s_j u\,(\cosh r-\cosh u)^{1/2}\,du.\]
If there are small eigenvalues, then $s_j$ would be purely imaginary
and if we let $\alpha_j=|s_{j}|$, then
\[q_{r}(s_j)=4\sqrt{2}\int_{0}^{r}\cosh \alpha_{j} u\,(\cosh r-\cosh u)^{1/2}\,du.\]
}
\label{remark:smalle}

\end{Remark}

\begin{Lemma} For every $j$, there is $r_{n}\to\infty$ such that
$q_{r_{n}}(s_{j})\to\infty$.
\label{lemma:q}
\end{Lemma}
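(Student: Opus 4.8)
The plan is to analyse the large-$r$ behaviour of $q_r(s_j)$ separately according to the size of the eigenvalue, using that $q_r$ is even in $s$. If $\lambda_j\le 1/4$ then $s_j$ is purely imaginary (or zero), so by Remark \ref{remark:smalle} the integrand equals $\cosh(\alpha_j u)(\cosh r-\cosh u)^{1/2}\ge0$, and a crude bound such as
\[q_r(s_j)\ge 4\sqrt2\int_0^{r/2}(\cosh r-\cosh(r/2))^{1/2}\,du = 2\sqrt2\,r\,(\cosh r-\cosh(r/2))^{1/2}\]
already shows $q_r(s_j)\to\infty$ as $r\to\infty$, with no subsequence needed. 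So the entire content is the oscillatory case $\lambda_j>1/4$, in which $s_j>0$ is real and the factor $\cos(s_j u)$ produces cancellation that must be controlled.

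For real $s_j$ I would first substitute $u=r-t$ and factor out the growth. Using the identity $\cosh r-\cosh(r-t)=\tfrac{e^r}{2}\big[(1-e^{-t})-e^{-2r}(e^t-1)\big]$, one obtains $q_r(s_j)=4e^{r/2}J(r)$, where
\[J(r)=\mathrm{Re}\Big[e^{is_jr}\int_0^r e^{-is_jt}\,B_r(t)^{1/2}\,dt\Big],\qquad B_r(t)=(1-e^{-t})-e^{-2r}(e^t-1).\]
Since $e^{r/2}\to\infty$, it suffices to produce $r_n\to\infty$ with $J(r_n)$ bounded below by a fixed positive constant; then $q_{r_n}(s_j)\to+\infty$. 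The next step is to show that $J(r)$ is, up to $o(1)$, a genuine trigonometric oscillation, by estimating two error terms. First, the correction inside $B_r$ is uniformly negligible on $[0,r]$: from $|\sqrt a-\sqrt b|\le\sqrt{|a-b|}$ one gets $|B_r(t)^{1/2}-(1-e^{-t})^{1/2}|\le e^{-r}e^{t/2}$, so replacing $B_r(t)^{1/2}$ by $(1-e^{-t})^{1/2}$ costs only $O(e^{-r/2})$. Second, writing $(1-e^{-t})^{1/2}=1+[(1-e^{-t})^{1/2}-1]$, the bracketed part is absolutely integrable on $[0,\infty)$, while $\int_0^r e^{-is_jt}\,dt=\frac{1-e^{-is_jr}}{is_j}$. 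Collecting terms I expect
\[J(r)=\mathrm{Re}\big[e^{is_jr}Z(s_j)\big]+o(1),\qquad Z(s_j)=\frac{1}{is_j}+\int_0^\infty e^{-is_jt}\big[(1-e^{-t})^{1/2}-1\big]\,dt,\]
an almost periodic function of amplitude $|Z(s_j)|$; hence $\limsup_{r\to\infty}J(r)=|Z(s_j)|$, and the required $r_n$ exist as soon as $Z(s_j)\ne0$.

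The main obstacle is precisely this nonvanishing of the leading coefficient $Z(s_j)$: it is an Abel-regularised integral and it is not a priori clear that the oscillation does not degenerate to zero amplitude. I would resolve this by recognising $Z(s_j)$ as the boundary value of a Beta integral. For $\mathrm{Re}\,z>0$ the substitution $w=e^{-t}$ gives
\[\int_0^\infty e^{-zt}(1-e^{-t})^{1/2}\,dt=\int_0^1 w^{z-1}(1-w)^{1/2}\,dw=B\!\left(z,\tfrac32\right)=\frac{\sqrt\pi}{2}\,\frac{\Gamma(z)}{\Gamma(z+\tfrac32)},\]
and by dominated convergence $Z(s_j)$ is the boundary value of this analytic function at $z=is_j$, so $Z(s_j)=\frac{\sqrt\pi}{2}\,\frac{\Gamma(is_j)}{\Gamma(is_j+\tfrac32)}$. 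Since the Gamma function has no zeros and neither $is_j$ nor $is_j+\tfrac32$ is one of its poles, we get $Z(s_j)\ne0$ for every real $s_j\ne0$. This settles the oscillatory case and, together with the elementary $\lambda_j\le1/4$ case above, proves the lemma.
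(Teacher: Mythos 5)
Your proof is correct, and it takes a genuinely different route from the paper's. The paper's argument is much shorter: it disposes of the small eigenvalues by the comparison $\cosh \alpha_j u\ge 1\ge \cos su$ (so $q_r(s_j)\ge q_r(s)$ for real $s$, and the real case suffices), and for real $s_j>0$ it replaces $(\cosh r-\cosh u)^{-1/2}$ by $(\cosh r)^{-1/2}$ under the integral sign to get the elementary bound
\[
\frac{q_r(s_j)}{4\sqrt2}\ \ge\ \frac{\cosh r\,\sin s_jr-s_j\sinh r\,\cos s_jr}{(\cosh r)^{1/2}\,s_j(1+s_j^2)},
\]
which it then evaluates at $r_n=\pi(2n+1/2)/s_j$, where the right-hand side equals $(\cosh r_n)^{1/2}/(s_j(1+s_j^2))\to\infty$. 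Your route instead extracts the exact asymptotics $q_r(s_j)=4e^{r/2}\bigl(\mathrm{Re}\bigl[Z(s_j)e^{is_jr}\bigr]+o(1)\bigr)$ and proves $Z(s_j)=\tfrac{\sqrt\pi}{2}\,\Gamma(is_j)/\Gamma(is_j+\tfrac32)\ne 0$. This is heavier (Beta integrals, boundary values of $\Gamma$), but it buys a sharper statement: the precise exponential growth rate and oscillation amplitude, and in the small-eigenvalue case divergence along \emph{all} $r\to\infty$ rather than a subsequence.

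There is a further point worth making explicit: your asymptotics show that the paper's displayed inequality is not merely unjustified (the pointwise replacement is valid only where $\cos s_ju\ge 0$) but actually fails for infinitely many $r$. Indeed, both sides are, to leading order, sinusoids in $r$ of size $e^{r/2}/\sqrt2$; the left side has amplitude $|Z(s_j)|$ while the elementary right side has amplitude $1/(s_j\sqrt{1+s_j^2})$, and using $|\Gamma(is)|^2=\pi/(s\sinh\pi s)$ and $|\Gamma(\tfrac32+is)|^2=\pi(\tfrac14+s^2)/\cosh\pi s$ one gets $|Z(s_j)|^2=\pi\coth(\pi s_j)/\bigl(s_j(1+4s_j^2)\bigr)$, which in general differs from $1/\bigl(s_j^2(1+s_j^2)\bigr)$ (at $s_j=1$: roughly $0.63$ versus $0.5$). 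Since the difference of two non-identical sinusoids of the same frequency is negative for arbitrarily large $r$, the paper's intermediate inequality cannot hold for all $r$; whether its conclusion survives at the special points $r_n$ amounts to controlling the phase of $Z(s_j)$, i.e., exactly the kind of analysis you carried out. So your longer argument is not overkill: it proves the lemma while repairing a real gap in the paper's shortcut.
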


\begin{proof} On account of Remark \ref{remark:smalle} it suffices to prove
 the lemma when $s_j$ is real and positive since if $s_j$ is purely imaginary
or zero $\cosh\alpha_j u\geq 1$ for all $u$ which implies
that $q_{r}(s_{j})\geq q_{r}(s)$ where $s$ is any real number.

Suppose then that $s_j$ is real and positive and note
\begin{align*}
\frac{q_{r}(s_{j})}{4\sqrt{2}}&
=\int_{0}^{r}\frac{\cos s_{j}u\,(\cosh r-\cosh u)}{(\cosh r-\cosh u)^{1/2}}\,du\\
&\geq \frac{1}{(\cosh r) ^{1/2}}\int_{0}^{r}\cos s_{j}u\,(\cosh r-\cosh u)\,du\\
&=\frac{1}{(\cosh r)^{1/2}\,s_{j}(1+s_{j}^2)}(\cosh r\sin s_{j}r-s_{j}\sinh r\cos s_{j}r).
\end{align*}
Thus, if we take $r_{n}=\frac{\pi (2n+1/2)}{s_{j}}$ we derive
\[\frac{q_{r_{n}}(s_{j})}{4\sqrt{2}}\geq \frac{(\cosh r_{n})^{1/2}}{s_{j}(1+s_{j}^{2})}\]
which proves the lemma.

\end{proof}

\begin{Lemma} Suppose that $\hat{h}_{r}(x)\leq C$ for all $x\in M$ and
all $r>0$. Then $h\equiv 0$.
\label{lemma:zero}
\end{Lemma}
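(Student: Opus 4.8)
The plan is to expand $h$ in the orthonormal eigenbasis and use the representation from the first Lemma together with the growth established in Lemma \ref{lemma:q}. Writing $h=\sum_j a_j\varphi_j$ with $a_0=0$, the first Lemma gives
\[
\hat h_r(x)=\sum_j a_j q_r(s_j)\varphi_j(x).
\]
Since $\{\varphi_j\}$ is orthonormal in $L^2(M)$, the coefficient $a_j q_r(s_j)$ is exactly the inner product $\langle \hat h_r,\varphi_j\rangle$. The idea is that if some $a_j\ne 0$, then along the sequence $r_n\to\infty$ from Lemma \ref{lemma:q} this coefficient would blow up, contradicting the uniform bound on $\hat h_r$.

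To make this precise I would fix an index $j$ and estimate $|a_j q_{r_n}(s_j)|=|\langle \hat h_{r_n},\varphi_j\rangle|$ by Cauchy-Schwarz:
\[
|a_j|\,|q_{r_n}(s_j)|\le \|\hat h_{r_n}\|_{L^2(M)}\,\|\varphi_j\|_{L^2(M)}=\|\hat h_{r_n}\|_{L^2(M)}.
\]
The hypothesis $\hat h_r(x)\le C$ for all $x$ and all $r$ controls $\hat h_{r_n}$ from above pointwise, and since $\hat h_r$ has zero average over $M$ (because $h$ does and each $K_r$ is symmetric, so $\int_M \hat h_r=\int_M h\int K_r$ vanishes with $\int_M h$), the lower bound $\hat h_r\ge -(\text{something})$ is not immediate from one-sided control alone. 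This is the one delicate point: a one-sided pointwise bound need not bound the $L^2$-norm. I would therefore upgrade the hypothesis observation by noting that $\hat h_r$ integrates to zero over $M$, so $\int_M \hat h_r^{+}=\int_M \hat h_r^{-}$, and since $\hat h_r\le C$ we also get $|\hat h_r|\le$ a quantity controlled on the set where it is negative by its integral; more directly, because $\int_M\hat h_r=0$ and $\hat h_r\le C$, one has $\int_M|\hat h_r|=2\int_M\hat h_r^+\le 2C\,\Vol(M)$, giving a uniform $L^1$ bound, and then $\|\hat h_{r_n}\|_{L^1}\ge |a_j q_{r_n}(s_j)|/\|\varphi_j\|_\infty$ after pairing against $\varphi_j$.

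Thus the main obstacle is organising the integrability bookkeeping so that the one-sided pointwise bound yields a uniform integral bound on $\hat h_{r_n}$; once that is in hand, pairing against a fixed $\varphi_j$ produces a uniformly bounded quantity equal to $a_j q_{r_n}(s_j)$, and Lemma \ref{lemma:q} forces $a_j=0$ for every $j$. Since this holds for all $j$ and $\{\varphi_j\}$ is complete, $h\equiv 0$. I expect the cleanest route is to establish the uniform $L^1$ (or $L^2$) bound on $\hat h_r$ first, pair it against each eigenfunction to extract $a_j q_r(s_j)$ as a bounded sequence in $r$, and then invoke the divergence of $q_{r_n}(s_j)$ to conclude $a_j=0$.
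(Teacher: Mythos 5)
Your proof is correct, and you correctly isolated the one genuine difficulty of this lemma: the hypothesis is only a \emph{one-sided} pointwise bound, which by itself controls no norm of $\hat{h}_r$. Your fix --- use $\int_M \hat{h}_r = 0$ (immediate from $a_0=0$, or from your kernel argument) to get $\int_M |\hat{h}_r| = 2\int_M \hat{h}_r^{+} \leq 2CA$, then pair against a fixed $\varphi_j$ via H\"older to obtain $|a_j|\,q_r(s_j) \leq 2CA\,\|\varphi_j\|_\infty$ uniformly in $r$, and finally invoke Lemma \ref{lemma:q} --- is sound, and the completeness of $\{\varphi_j\}$ finishes the argument. The paper handles the same difficulty with a slightly different one-line trick: fix $k$, choose a constant $B_k$ so that $\varphi_k + B_k$ is positive, multiply the pointwise inequality $\sum_j a_j q_r(s_j)\varphi_j(x) \leq C$ by this positive function and integrate; since $\int_M \varphi_k = 0$ and $\int_M \hat{h}_r = 0$, this yields $a_k q_r(s_k) \leq C B_k A$ directly, with the case $a_k \leq 0$ treated symmetrically by making $\varphi_k + B_k$ negative. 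The two arguments exploit exactly the same structural facts (zero average of $\hat{h}_r$ and boundedness of eigenfunctions on the compact $M$), so they are essentially equivalent in content; yours factors through an explicit uniform $L^1$ bound, while the paper's multiplication trick bypasses it. Two cosmetic points for a final write-up: the opening Cauchy--Schwarz-in-$L^2$ attempt, which you rightly abandoned, should be cut; and before writing $\hat{h}_r^{+} \leq C$ one should note $C \geq 0$, which is forced by $\hat{h}_r \leq C$ together with $\int_M \hat{h}_r = 0$.
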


\begin{proof} We will show that all the Fourier coefficients $a_{j}$ vanish.
Suppose $a_{k}\geq 0$ for some $k$.
By compactness there is a constant $B_k$ such that $\varphi_k+B_k$
is a positive function. Multiply both sides of the inequality
\[\sum_{j} a_{j}q_{r}(s_{j})\varphi_{j}(x)\leq C\]
by $\varphi_k(x)+B_k$ and integrate with respect to $x$ to obtain ($a_{0}=0$)
\[a_{k}q_{r}(s_{k})\leq C\,B_{k}\,A\]
for all $r>0$. By Lemma \ref{lemma:q} this can only happen if $a_{k}=0$.

If $a_{k}\leq 0$ we proceed in a similar way by considering a constant
$B_k$ such that $\varphi_k+B_k$ is a negative function. In any case we obtain
$a_k=0$ as desired.

\end{proof}

\subsection{End of the proof of the Theorem} Write $d\beta=h\,\mu_{g}$ where
$h$ has zero average over $M$. Inequality (\ref{ineq:rad})
is saying that $\hat{h}_{r}(x)\leq 2\pi\,\sqrt{2c}$ for all $x\in M$ and
$r>0$. By Lemma \ref{lemma:zero}, $h$ vanishes identically and $\beta$
must be closed.

\end{document}